\theoremstyle{definition}
\newtheorem{defi}{Definition}[section]
\newtheorem{example}[defi]{Example}
\theoremstyle{plain}
\newtheorem{theorem}[defi]{Theorem}
\newtheorem{lemma}[defi]{Lemma}
\newtheorem{cor}[defi]{Corollary}
\begin{document}

\title{Upper bounds on the average height of random binary trees}

\author{Louisa Seelbach Benkner}
\email{louisa.seelbach@gmail.com}

\address{Universit{\"a}t Siegen, Germany}

\begin{abstract} 
We study the average height of random trees generated by leaf-centric binary tree sources as introduced by Zhang,
Yang and Kieffer in \cite{ZhangYK14}. 
A leaf-centric binary tree source induces for every
$n \geq 2$ a probability distribution on the set of binary trees with $n$ leaves. 
Our results generalize a result by Devroye, according to which the average height of a random binary search tree of size $n$ is in $\mathcal{O}(\log n)$.
\end{abstract}
\maketitle
\section{Introduction}

The \emph{height} of a rooted tree is the number of edges in the longest path from the root node to another node of the tree. Analyzing the height of random trees is a natural research topic not only in combinatorics, but also in theoretical computer science, as the height of a tree occurs, e.g., as the stack size used by some tree traversal algorithms, see, e.g.,~\cite{FlajoletOdlyzko82}.
The height of random trees has been studied extensively in various papers and with respect to various random tree models, e.g., for uniformly random plane trees~\cite{deBruijnKnuthRice72}, uniformly random unordered binary trees~\cite{BroutinFlajolet12}, uniformly random labelled unordered trees~\cite{renyiszekeres67}, and digital search trees and tries~\cite{Devroye84,Flajolet83,KnesslS00,Szpankowski91}, only to name a few.

In this work, we focus on random binary trees. Binary trees will be rooted ordered trees where every node is either a leaf or has exactly two children (a left child and a right child). 
We define the size of a binary tree as the number of leaves (as every binary tree with $n$ leaves has exactly $2n-1$ nodes in total).

So far, the expected height of random binary trees has mainly been studied with respect to uniformly random (ordered) binary trees and binary search trees.
A uniformly random ordered binary tree of size $n$ is a random tree whose corresponding probability distribution is the uniform probability distribution on the set of ordered binary trees of size $n$.
Flajolet and Odlyzko showed in \cite{FlajoletOdlyzko82} (in a more general setting) that the expected height of a uniformly random binary tree of size $n$ asymptotically grows as $2\sqrt{\pi n}(1+o(1))$.

Another important type of random trees are random binary search trees. A random binary search tree of size $n$ is a binary search tree built by inserting the keys $\{1, \dots, n\}$ according to a uniformly chosen random permutation on $\{1, \dots, n\}$. Random binary search trees naturally arise in theoretical computer science, see e.g. \cite{Drmota09}. 
In \cite{Devroye86}, Devroye proved that the expected height of a random binary search tree of size $n$ asymptotically grows as $c\ln(n)(1+o(1))$, where $c\approx 4.31107...$ is the unique solution of $c\ln(2e/c)=1$ with $c \geq 2$.
Previous results on the average height of random binary search trees were shown by Robson~\cite{Robson79,Robson82} and Pittel~\cite{Pittel84}, and further refined results were shown e.g.~with respect to the variance~\cite{Devroye95, Drmota02} (see also \cite{BroutinDevroyeMcleishSalle08, Devroye87, Devroye90, Drmota01,Drmota03, Drmota04, KnesslS02} for further refinements and generalisations).

The goal of this work is to obtain general upper bounds on the average height of random binary trees that cover large classes of distributions. 
A very general concept to model probability distributions on the set of binary trees of size $n$ was presented by Kieffer et al.~in \cite{KiefferYS09} (see also \cite{ZhangYK14}), where the authors introduce \emph{leaf-centric binary tree sources} as an extension of the classical notion of an information source on finite sequences to binary trees.

A leaf-centric binary tree source is induced by a mapping $\sigma: \mathbb{N}\times\mathbb{N}\to[0,1]$ that satisfies $\sum_{i=1}^{n-1}\sigma(i,n-i)=1$ for every $n \geq 2$. In other words, $\sigma$ restricted to the set of ordered pairs $S_n:=\{(i,n-i) \mid 1 \leq i \leq n-1\} $ is a probability mass function for every $n \geq 2$. 
A leaf-centric binary tree source randomly generates a binary tree of size $n$ as follows: we start with a single root node labeled with $n$ and randomly choose a pair $(i,n-i)$ according to the distribution $\sigma$ on $S_n$. Then, a left (resp. right) child labeled with $i$ (resp. $n-i$) is attached to the root, and the process is repeated with these two nodes. The process stops at nodes with label $1$ (and in the end, the node labels are omitted). In particular, such a mapping $\sigma$ induces a function $P_{\sigma}$ which restricts to a probability mass function on the set of binary trees of size $n$ for every $n \geq 1$.

The binary search tree model is the leaf-centric binary tree source where the mapping $\sigma$ corresponds to the uniform probability distribution on the set $S_n$ for every $n \geq 2$ (Example~\ref{ex:bst}). Moreover, the uniform probability distribution on the set of binary trees of size $n$ can be modeled as a leaf-centric binary tree source as well (Example~\ref{ex:uniform}). Another well-known leaf-centric tree source is the binomial random tree model \cite{KiefferYS09} (see Example~\ref{ex:dst}), where the mapping $\sigma$ corresponds to a binomial distribution on $S_n$. 

Recently, in \cite{SeelbachLohreyWagner18} (see also \cite{GanardiHuckeLohreySeelbach19,SeelbachLohrey18}), in the context of DAG-compression, the average number of distinct fringe subtrees in random trees was analysed with respect to several classes of leaf-centric binary tree sources.
More in detail, let $F_{n,\sigma}$ denote the (random) number of distinct fringe subtrees occurring in a random tree of size $n$ generated by the leaf-centric tree source corresponding to the mapping $\sigma$.
In \cite{SeelbachLohreyWagner18, GanardiHuckeLohreySeelbach19}, the authors present several classes of leaf-centric binary tree sources, each defined in terms of some conditions on the mapping $\sigma$, and then derived upper and lower bounds on the average $\mathbb{E}(F_{n,\sigma})$ that hold for all mappings $\sigma$ in the respective class.
In particular, results from~\cite{SeelbachLohreyWagner18, GanardiHuckeLohreySeelbach19} generalize previous results on the number of distinct fringe subtrees in random trees, in the sense that from these general theorems that cover whole classes of sources previously shown asymptotic bounds on the average number of fringe subtrees in random binary search trees and uniformly random binary trees could be deduced.

The main goal of this work is to carry out investigations similar to those of~\cite{SeelbachLohreyWagner18, GanardiHuckeLohreySeelbach19} with respect to another tree parameter: the average height $\mathbb{E}(H_{n,\sigma})$.
Specifically, we consider two classes of leaf-centric binary tree sources and obtain upper bounds on $\mathbb{E}(H_{n,\sigma})$ for mappings $\sigma$ from the respective classes.
Here and in the rest of the paper, increasing/decreasing functions are not necessarily strictly monotone: if $f$ is increasing, then $f(x) \geq f(y)$ whenever $x > y$, but not necessarily $f(x) > f(y)$ (and analogously for decreasing functions). The classes of leaf-centric tree sources considered in this work are the following.
\begin{itemize}
\item[(i)] A leaf-centric tree source with mapping $\sigma$ is called \emph{$\psi$-upper-bounded} for a decreasing function $\psi: \mathbb{R} \to (0,1]$ if there is a constant $N$ such that
for all $1 \leq i \leq n-1$ and $n \geq N$: 
\begin{align*}
\sigma(i,n-i)\leq \psi(n).
\end{align*}
\item[(ii)] A leaf-centric tree source with mapping $\sigma$ is called \emph{$\phi$-weakly-balanced} for a decreasing function $\phi: \mathbb{N} \to (0,1]$ if there is a constant $\gamma \in (0,\frac{1}{2})$ and an integer $N$ such that for all $n \geq N$:
\begin{align*}
\sum_{\gamma n \leq i \leq (1-\gamma)n}\sigma(i,n-i) \geq \phi(n).
\end{align*}
\end{itemize}
Both properties have been introduced in \cite{SeelbachLohreyWagner18, SeelbachLohrey18}.
Property (i) quantifies how close $\sigma$ is to the binary search tree model. The latter is $\psi$-upper-bounded for $\psi(n) = 1/(n-1)$, which is the smallest function $\psi$  for which (i) makes
sense (since  the sum over all $\sigma(i,n-i)$ for $1 \leq i \leq n-1$ has to be one).
Property (ii) generalizes the concept of balanced binary tree sources from \cite{GanardiHuckeLohreySeelbach19,ZhangYK14}: When randomly constructing a binary tree with respect to a leaf-centric tree source of
type (ii), the probability that the current weight is roughly equally split among the two children is bounded from below by the function $\phi$. Therefore, for slowly decreasing
functions $\phi$, balanced trees are preferred by this model.

As our main results, we obtain upper bounds on the expected height $\mathbb{E}(H_{n,\sigma})$ for the classes of $\psi$-upper-bounded sources and $\phi$-weakly-balanced sources (Theorem~\ref{thm:upper-bounded} and Theorem~\ref{thm:weakly-balanced-upper-bound}).
In particular, these two general main theorems both yield an upper bound of order $\mathcal{O}(\log n)$ for the average height in the binary search tree model (as shown in~\cite{Devroye86}). 
Moreover, from Theorem~\ref{thm:weakly-balanced-upper-bound}, we obtain an upper bound of order $\mathcal{O}(\sqrt{n}\log(n)^2)$ for the uniform distribution (which is slightly weaker than the upper bound shown in~\cite{FlajoletOdlyzko82}) and an upper bound of order $\mathcal{O}(\log n)$ for the binomial random tree model.
What sets our main theorems apart from previous results on the average height of random trees is that they cover whole classes of sources simultaneously.

\section{Preliminaries}\label{sec:preliminaries}

We use the classical Landau notations $\mathcal{O}$, $o$, $\Omega$ and $\omega$.
In the following, $\log x$ will denote the binary logarithm $\log_2 x$ of a positive real number $x$. With $e$ we denote Euler's number and the natural logarithm $\log_e x$ will be denoted by $\ln x$.
Let $\mathbb{N}$ denote the natural numbers without zero. 

\subsection{Binary Trees}

We write $\mathcal{T}$ for the set of \emph{(full) binary trees}, i.e., of rooted ordered trees such that each node has either exactly two or zero children.
The \emph{size} of a binary tree $t$ is the number of leaves of $t$ and denoted by $|t|$. 
With $\mathcal{T}_n$ we denote the set of binary trees which have exactly $n$ leaves.

A \emph{fringe subtree} of a tree is a subtree that consists of a node and all its descendants.
For a node $v$ of a binary tree $t \in \mathcal{T}$, let $t[v]$ denote the fringe subtree of $t$ which is rooted at $v$. 
For an inner node $v$ of $t$, we denote with $t_{\ell}[v]$ (resp.~$t_r[v]$)  the fringe subtree rooted at $v$'s left (resp.~right) child node. 
If $v$ is the root node of $t$, we write $t_{\ell}$ and $t_r$ for $t_{\ell}[v]$ and $t_r[v]$ and call $t_{\ell}$ (resp.~$t_r$) the \emph{left subtree} (resp. \emph{right subtree}) of $t$.
For a binary tree $t \in \mathcal{T}$, let $h(t)$ denote the \emph{height} of $t$, which is inductively defined as 
\begin{align*}
h(t)=\begin{cases}
0 \quad&\text{if } |t|=1,\\
1+\max(h(t_{\ell}),h(t_r)) &\text{otherwise.}
\end{cases}
\end{align*}

\subsection{Leaf-centric binary tree sources}\label{leafcentric}

With $\mathcal{L}$ we denote the set of all functions $\sigma \colon \mathbb{N} \times \mathbb{N} \rightarrow [0,1]$ which satisfy 
\begin{align}\label{sigmacond}
\sum_{i,j \geq 1 \atop i+j=k} \sigma(i,j) = 1 
\end{align}  
for every integer $k \geq 2$. A mapping $\sigma\in \mathcal{L}$ induces a probability mass function $P_{\sigma}\colon \mathcal{T}_n \rightarrow [0,1]$ for every $n \geq 1$ in the following way. 
Define $P_{\sigma} \colon \mathcal{T} \rightarrow [0,1]$ inductively by 
\begin{equation} \label{eq-prob-mass-function-P}
P_{\sigma}(t)=\begin{cases}
1 \quad &\text{if } |t| =1,\\
\sigma(|t_{\ell}|,|t_r|) \cdot P_{\sigma}(t_{\ell}) \cdot P_{\sigma}(t_r) \quad &\text{otherwise.}
\end{cases}
\end{equation}
A tuple $(\mathcal{T}, (\mathcal{T}_n)_{n \in \mathbb{N}}, P_{\sigma})$ with $\sigma \in \mathcal{L}$ is called a \emph{leaf-centric tree source} \cite{ZhangYK14, KiefferYS09}.
Intuitively, a leaf-centric binary tree source randomly generates a binary tree of size $n$ as follows: we start at the root node and determine the sizes of the left and of the right subtree, where the probability that the left subtree is of size $i$ for $i \in \{1,\dots, n-1\}$ (and consequently, the right subtree is of size $n-i$) is given by $\sigma(i,n-i)$. This process then recursively continues in the left and right subtree, i.e., the leaf-centric binary tree source then randomly generates a binary tree of size $i$ as the left subtree and a binary tree of size $n-i$ as the right subtree.
In particular, for $t \in \mathcal{T}$, we have
\begin{align*}
P_{\sigma}(t)=\!\!\!\!\prod_{v \text{ inner node of } t}\sigma(|t_{\ell}[v]|,|t_r[v]|).
\end{align*}

Several well-known random tree models can be characterized as leaf-centric tree sources.

\begin{example}\label{ex:bst}
The \emph{binary search tree model} \cite{Drmota09} corresponds to the leaf-centric binary tree source defined by 
\begin{align*}
\sigma_{bst}(i,n-i)=\frac{1}{n-1}
\end{align*}
for every $n \geq 2$ and $1 \leq i \leq n-1$. This distribution over binary trees arises if a binary search tree of size
$n$ is built by inserting the keys $1,\dots,n$ according to a uniformly chosen random permutation on $1,\dots, n$ \cite{Drmota09}.
\end{example}

\begin{example}\label{ex:uniform}
The \emph{uniform distribution} on the set of binary trees $\mathcal{T}_n$ is induced by the leaf-centric binary tree source with 
\begin{align*}
\sigma_{uni}(i,n-i)=\frac{|\mathcal{T}_i||\mathcal{T}_{n-i}|}{|\mathcal{T}_n|}
\end{align*}
for every $n \geq 2$ and $1 \leq i \leq n-1$ \cite{KiefferYS09}.
\end{example}

\begin{example}\label{ex:dst}
Fix a constant $p \in (0,1)$ and define
\begin{equation} \label{eq-dst}
\sigma_{bin,p}(i,n-i)=p^{i-1}(1-p)^{n-i-1}\binom{n-2}{i-1}
\end{equation}
for every $n\geq 2$ and $1 \leq i \leq n-1$. This leaf-centric binary tree source corresponds to the \emph{binomial random tree model}, which was studied in \cite{KiefferYS09} for the case $p=1/2$, and which is a slight variant of the \emph{digital search tree model}, \cite{Drmota09}.
\end{example}

In this paper, we are interested in the expected height of random binary trees.
With $T_{n,\sigma}$ we denote a random tree of size $n$ drawn from the set $\mathcal{T}_n$ according to the probability mass function $P_{\sigma}\colon \mathcal{T}_n \to [0,1]$ from
\eqref{eq-prob-mass-function-P}, and with $H_{n,\sigma}$ we denote the height of the random tree $T_{n, \sigma}$. 
In the following, we investigate $H_{n,\sigma}$ under certain conditions on the mapping $\sigma$. 
That is, we will assume that the mapping $\sigma$ satisfies certain properties and then derive bounds on $\mathbb{E}(H_{n,\sigma})$.

\section{Upper bounds on the average height}

In this section, we present two classes of leaf-centric tree sources, for which we will be able to obtain upper bounds on $\mathbb{E}(H_{n,\sigma})$.
The main technique we will apply in order to obtain upper bounds on $\mathbb{E}(H_{n,\sigma})$ for these two classes is the following.
Let $(\varphi_n)_{n \in \mathbb{N}}$ denote a decreasing sequence of real numbers, with $\varphi_n>1$ for every $n \in \mathbb{N}$.
In order to obtain an upper bound on $\mathbb{E}(H_{n,\sigma})$, we inductively bound 
\begin{align*}
\mathbb{E}(\varphi_n^{H_{n,\sigma}})=\sum_{t \in \mathcal{T}_n}P_{\sigma}(t)\varphi_n^{h(t)}
\end{align*}
for every $n \in \mathbb{N}$. 
The result for $\mathbb{E}(H_{n,\sigma})$ then follows by applying Jensen's inequality.
The following recursive estimate for $\mathbb{E}(\varphi_n^{H_{n,\sigma}})$ will be useful:

\begin{lemma}\label{lemma:recursive}
Let $(\varphi_n)_{n \in \mathbb{N}}$ be a decreasing sequence of real numbers, with $\varphi_n>1$ for every $n \in \mathbb{N}$. 
Then, for every $n \geq 2$, 
\begin{align*}
\mathbb{E}(\varphi_n^{H_{n,\sigma}})\leq \varphi_n\sum_{k=1}^{n-1}\sigma(k,n-k)\left(\mathbb{E}(\varphi_k^{H_{k,\sigma}})+\mathbb{E}(\varphi_{n-k}^{H_{n-k,\sigma}})\right).
\end{align*}
\end{lemma}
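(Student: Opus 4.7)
The plan is to unfold the definition of $\mathbb{E}(\varphi_n^{H_{n,\sigma}})$ by splitting every tree $t \in \mathcal{T}_n$ into its left and right subtrees and then applying two elementary estimates: one for the $\max$ in the recursive definition of $h(t)$, and one exploiting that $(\varphi_n)_{n \in \mathbb{N}}$ is decreasing.

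First, I would write
\begin{align*}
\mathbb{E}(\varphi_n^{H_{n,\sigma}}) = \sum_{k=1}^{n-1} \sum_{\substack{t_{\ell} \in \mathcal{T}_k \\ t_r \in \mathcal{T}_{n-k}}} \sigma(k,n-k)\, P_{\sigma}(t_{\ell})\, P_{\sigma}(t_r)\, \varphi_n^{1+\max(h(t_{\ell}),h(t_r))},
\end{align*}
using that $P_{\sigma}(t) = \sigma(|t_{\ell}|,|t_r|) P_{\sigma}(t_{\ell}) P_{\sigma}(t_r)$ and $h(t) = 1 + \max(h(t_{\ell}), h(t_r))$. Then, since $\varphi_n > 1$ and $\max(a,b) \leq a + b$ for non-negative exponents, I have $\varphi_n^{\max(h(t_{\ell}),h(t_r))} = \max(\varphi_n^{h(t_{\ell})}, \varphi_n^{h(t_r)}) \leq \varphi_n^{h(t_{\ell})} + \varphi_n^{h(t_r)}$, so one factor of $\varphi_n$ can be pulled out and the exponent split into a sum.

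Next, I would use monotonicity: for $1 \leq k \leq n-1$ we have $k < n$ and $n-k < n$, so by the assumption that $(\varphi_n)$ is decreasing, $\varphi_k \geq \varphi_n$ and $\varphi_{n-k} \geq \varphi_n$. Because $\varphi_n > 1$ and $h(t_{\ell}), h(t_r) \geq 0$, this gives $\varphi_n^{h(t_{\ell})} \leq \varphi_k^{h(t_{\ell})}$ and $\varphi_n^{h(t_r)} \leq \varphi_{n-k}^{h(t_r)}$. Plugging these estimates in and factoring the double sum over $(t_{\ell}, t_r)$ into two independent sums, the identities $\sum_{t_\ell \in \mathcal{T}_k} P_{\sigma}(t_\ell) \varphi_k^{h(t_\ell)} = \mathbb{E}(\varphi_k^{H_{k,\sigma}})$ and $\sum_{t_r \in \mathcal{T}_{n-k}} P_{\sigma}(t_r) = 1$ (and symmetrically) then yield exactly the claimed bound.

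I do not expect any real obstacle here; the proof is essentially bookkeeping. The only subtlety worth flagging is the bound $\varphi_n^{\max(a,b)} \leq \varphi_n^a + \varphi_n^b$, which is where the bound becomes looser than an equality, and the use of monotonicity $\varphi_k, \varphi_{n-k} \geq \varphi_n$, which is what allows the right-hand side to be expressed in terms of $\mathbb{E}(\varphi_k^{H_{k,\sigma}})$ and $\mathbb{E}(\varphi_{n-k}^{H_{n-k,\sigma}})$ rather than in the uniform exponent $\varphi_n$.
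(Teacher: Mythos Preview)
Your proposal is correct and follows essentially the same approach as the paper: decompose the expectation via the recursive definitions of $P_\sigma$ and $h$, bound $\varphi_n^{\max(h(t_\ell),h(t_r))} \leq \varphi_n^{h(t_\ell)} + \varphi_n^{h(t_r)}$, and then use that $(\varphi_n)$ is decreasing to replace $\varphi_n$ by $\varphi_k$ and $\varphi_{n-k}$ in the exponents before recombining into the two expectations. The paper's proof is exactly this chain of (in)equalities.
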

\begin{proof}
By the recursive definition of $P_\sigma$, equation \eqref{eq-prob-mass-function-P}, and the recursive definition of $h(t)$, we obtain:
\begin{align*}
\mathbb{E}(\varphi_n^{H_{n,\sigma}})&=\sum_{t \in \mathcal{T}_n}P_{\sigma}(t)\varphi_n^{h(t)}\\
&=\sum_{k=1}^{n-1}\sigma(k,n-k)\sum_{u \in \mathcal{T}_k}\sum_{v \in \mathcal{T}_{n-k}}P_{\sigma}(u)P_{\sigma}(v)\varphi_n^{1+\max(h(u),h(v))}\\
&\leq \varphi_n\sum_{k=1}^{n-1}\sigma(k,n-k)\sum_{u \in \mathcal{T}_k}\sum_{v \in \mathcal{T}_{n-k}}P_{\sigma}(u)P_{\sigma}(v)\left(\varphi_n^{h(u)}+\varphi_n^{h(v)}\right)\\
&\overset{(*)}{\leq} \varphi_n\sum_{k=1}^{n-1}\sigma(k,n-k)\sum_{u \in \mathcal{T}_k}\sum_{v \in \mathcal{T}_{n-k}}P_{\sigma}(u)P_{\sigma}(v)\left(\varphi_k^{h(u)}+\varphi_{n-k}^{h(v)}\right)\\
&=\varphi_n\sum_{k=1}^{n-1}\sigma(k,n-k)\left(\mathbb{E}(\varphi_k^{H_{k,\sigma}})+\mathbb{E}(\varphi_{n-k}^{H_{n-k,\sigma}})\right),
\end{align*}
where the inequality $(*)$ follows as the sequence $(\varphi_n)_{n \in \mathbb{N}}$ is decreasing.
 \end{proof}

\subsection{Upper-bounded sources}
The first natural class of leaf-centric tree sources we consider is the following, where the function $\sigma$ is bounded from above in terms of a function $\psi$.
Similar classes of leaf-centric tree sources were introduced in \cite{SeelbachLohreyWagner18} in the context of DAG-compression of random trees.

\begin{defi}[$\psi$-upper-bounded sources]
Let $\psi\colon \mathbb{N} \to (0,1]$ denote a decreasing function and let $N \geq 1$.
With $\mathcal{L}_{up}(\psi,N) \subseteq \mathcal{L}$ we denote the set of mappings $\sigma \in \mathcal{L}$, which satisfy
\begin{align*}
\sigma(i,n-i) \leq \psi(n)
\end{align*} 
for every integer $1 \leq i \leq n-1$ and $n \geq N$.
In the same way, let $\mathcal{L}_{up}^*(\psi,N) \subseteq \mathcal{L}$ denote the set of mappings $\sigma \in \mathcal{L}$, which satisfy
\begin{align*}
\sigma(i,n-i)+\sigma(n-i,i)\leq \psi(n)
\end{align*}
for every integer $1 \leq i \leq n-1$ and $n \geq N$.
\end{defi}
Note that we have $\mathcal{L}_{up}^*(\psi,N)\subseteq \mathcal{L}_{up}(\psi,N) \subseteq \mathcal{L}_{up}^*(2\psi,N)$. Furthermore, note that without loss of generality, if we consider the class $\mathcal{L}_{up}(\psi,N)$, we assume that $\psi(n) \geq 1/(n-1)$ for every integer $n \geq N$, as the values $\sigma(i,n-i)$ have to add up to $1$ for $1 \leq i \leq n-1$ by condition \eqref{sigmacond}. 
In the same way, if we consider the class $\mathcal{L}_{up}^*(\psi,N)$, we can assume that $\psi(n) \geq 2/(n-1)$ for every integer $n \geq N$.
As our first main theorem, we show the following bound on $\mathbb{E}(H_{n,\sigma})$ for the class of $\psi$-upper-bounded sources.

\begin{theorem}\label{thm:upper-bounded}
Let $\sigma \in \mathcal{L}_{up}^*(\psi,N)$. 
If there is a differentiable, non-negative function $\vartheta \colon \mathbb{R}^+ \to \mathbb{R}$ with non-constant derivative $\vartheta'$, such that 
\begin{itemize}
\item[(i)] $\vartheta'$ is increasing for $x\geq 1$, and
\item[(ii)] $e\psi(x)\vartheta(x) \leq \vartheta'(x)$ for all $x \geq N$,
\item[(iii)] $\vartheta'(1)\geq 1$,
\end{itemize}
then 
\begin{align*}
\mathbb{E}(H_{n,\sigma})\leq \ln \vartheta'(n)(1+o(1)).
\end{align*}
\end{theorem}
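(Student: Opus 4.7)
The plan is to study the moment generating function $Y_n := \mathbb{E}(e^{H_{n,\sigma}})$, prove $Y_n \leq C\,\vartheta'(n)$ by induction on $n$ for some constant $C \geq 1$ that absorbs the finitely many base cases, and then apply Jensen's inequality to pass back to $\mathbb{E}(H_{n,\sigma})$. The recursion that drives the induction is supplied by Lemma~\ref{lemma:recursive} with the constant sequence $\varphi_n = e$; this qualifies as ``decreasing'' in the paper's convention and satisfies $\varphi_n > 1$.

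The inductive step for $n \geq N$ proceeds in three moves. First, I would use the symmetry $(k,n-k)\leftrightarrow(n-k,k)$ to re-index the second of the two sums produced by Lemma~\ref{lemma:recursive}, pairing $\sigma(k,n-k)$ with $\sigma(n-k,k)$ and invoking the defining property of $\mathcal{L}_{up}^{*}(\psi,N)$:
\[
Y_n \;\leq\; e\sum_{k=1}^{n-1}\bigl(\sigma(k,n-k)+\sigma(n-k,k)\bigr)\,Y_k \;\leq\; e\,\psi(n)\sum_{k=1}^{n-1}Y_k.
\]
Second, using condition (i) that $\vartheta'$ is increasing on $[1,\infty)$ together with the inductive hypothesis $Y_k \leq C\vartheta'(k)$, the discrete sum is bounded by an integral,
\[
\sum_{k=1}^{n-1}\vartheta'(k) \;\leq\; \int_1^n \vartheta'(x)\,dx \;=\; \vartheta(n)-\vartheta(1) \;\leq\; \vartheta(n),
\]
where non-negativity of $\vartheta$ is used in the last step. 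Third, condition (ii) closes the loop: $Y_n \leq e\,\psi(n)\,C\,\vartheta(n) \leq C\,\vartheta'(n)$. For the base cases $1 \leq n < N$, where the $\psi$-upper-bound is unavailable, I would set $C := \max\bigl(1,\,\max_{1\leq n<N} Y_n/\vartheta'(n)\bigr)$; this is finite because each $Y_n$ is a finite sum over $\mathcal{T}_n$, and $\vartheta'(n) \geq \vartheta'(1) \geq 1$ by (i) and (iii), so the quotients are well-defined. Jensen's inequality applied to the convex function $x \mapsto e^x$ then yields $\mathbb{E}(H_{n,\sigma}) \leq \ln Y_n \leq \ln C + \ln \vartheta'(n)$.

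I expect the principal subtlety to lie in the final step: verifying that $\ln \vartheta'(n) \to \infty$ so that the additive constant $\ln C$ is absorbed into the $(1+o(1))$ factor. This should follow from the observation made just before the theorem's statement that one may assume $\psi(n) \geq 2/(n-1)$ without loss of generality for $\sigma \in \mathcal{L}_{up}^{*}(\psi,N)$; condition (ii) then forces $(\ln\vartheta)'(x) \geq 2e/(x-1)$ for $x \geq N$, which integrates to at-least-polynomial growth of $\vartheta$, and, via (ii) once more, of $\vartheta'$. A secondary technical point is that the induction cannot be closed with the clean hypothesis $Y_n \leq \vartheta'(n)$ directly---the constant $C$ is genuinely needed to carry the small-$n$ contributions through the recursion---but this is harmless since it only affects the lower-order term.
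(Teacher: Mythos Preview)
Your proposal is correct and follows essentially the same approach as the paper: both prove $\mathbb{E}(e^{H_{n,\sigma}})\leq C\,\vartheta'(n)$ by induction, using Lemma~\ref{lemma:recursive} with $\varphi_n=e$, the symmetrization trick to invoke the $\mathcal{L}_{up}^{*}$ bound, the integral comparison $\sum_{k=1}^{n-1}\vartheta'(k)\leq\int_1^n\vartheta'=\vartheta(n)-\vartheta(1)\leq\vartheta(n)$, and condition~(ii) to close the induction, followed by Jensen. The only cosmetic difference is that the paper takes the explicit constant $C=e^{N}$ (using that $h(t)\leq n-1<N$ for $|t|=n<N$ and $\vartheta'(n)\geq 1$), whereas you define $C$ as a maximum over the base cases; your added justification that $\ln\vartheta'(n)\to\infty$ via $\psi(n)\geq 2/(n-1)$ is in fact more explicit than what the paper provides.
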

\begin{proof}
In order to prove the statement, we consider the expected value
\begin{align*}
\mathbb{E}(e^{H_{n,\sigma}})=\sum_{t \in \mathcal{T}_n}P_{\sigma}(t)e^{h(t)}
\end{align*}
and show that 
\begin{align*}
\mathbb{E}(e^{H_{n,\sigma}})\leq e^N\vartheta'(n).
\end{align*}
We prove the statement inductively in $n$.
For the base case, we consider the integers $1 \leq n < N$.
By assumption (iii), we have $\vartheta'(1)\geq 1$ and thus $\vartheta'(x)\geq 1$ for all $x \geq 1$.
As the maximal height of a binary tree of size $n < N$ is bounded from above by $N$, we find
$
\mathbb{E}(e^{H_{n,\sigma}})\leq e^N\leq \vartheta'(n)e^N
$
for every integer $1 \leq n <N$.
Hence, the base case follows. 
For the induction step, let $n \geq N$. 
Applying Lemma~\ref{lemma:recursive} (with $\varphi_n=e$ for all $n \geq 0$), we find
\begin{align*}
\mathbb{E}(e^{H_{n,\sigma}})&\leq e\sum_{k=1}^{n-1}\sigma(k,n-k)\left(\mathbb{E}(e^{H_{k,\sigma}})+\mathbb{E}(e^{H_{n-k,\sigma}})\right)\\
&=e\sum_{k=1}^{n-1}\left(\sigma(k,n-k)+\sigma(n-k,k)\right)\mathbb{E}(e^{H_{k,\sigma}})\\
&\leq e^{N+1}\sum_{k=1}^{n-1}\left(\sigma(k,n-k)+\sigma(n-k,k)\right)\vartheta'(k),
\end{align*}
where the last inequality follows from the induction hypothesis.
As $\sigma \in \mathcal{L}^*_{up}(\psi)$, we obtain
\begin{align*}
\mathbb{E}(e^{H_{n,\sigma}})\leq e^{N+1} \psi(n)\sum_{k=1}^{n-1}\vartheta'(k).
\end{align*}
As $\vartheta'$ is increasing by condition (i), we find
\begin{align*}
\mathbb{E}(e^{H_{n,\sigma}})\leq e^{N+1} \psi(n)\int_{k=1}^{n}\vartheta'(x) \mathrm{dx} = e^{N+1} \psi(n)\left(\vartheta(n)-\vartheta(1)\right) \leq e^{N+1} \psi(n) \vartheta(n),
\end{align*}
since $\vartheta(1)\geq 0$ by assumption.
By condition (ii), we now have $e \psi(n)\vartheta(n) \leq \vartheta'(n)$, which concludes the induction.
Using Jensen's inequality, we now obtain
\begin{align*}
\mathbb{E}(H_{n,\sigma}) \leq \ln \left(\vartheta'(n)\right)+N,
\end{align*}
which concludes the proof.
\end{proof}

From Theorem~\ref{thm:upper-bounded}, we obtain the following corollary.

\begin{cor}\label{cor:xhochalpha}
Let $\sigma \in \mathcal{L}_{up}^*(\psi,N)$, where $N \in \mathbb{N}$, $\psi(x)=c\cdot x^{-\alpha}$ for $0 \leq \alpha \leq 1$ and assume that $c\geq  e^{-1}$ holds\footnote{Recall that if we consider the class $\mathcal{L}_{up}^*(\psi)$, the mapping $\psi$ has to satisfy the lower bound $\psi(x)\geq 2/(x-1)$, such that $ec-1>0$ always holds in the case $\alpha=1$ in Corollary~\ref{cor:xhochalpha}.}. Then
\begin{align*}
\mathbb{E}(H_{n,\sigma}) \leq \begin{dcases}
\frac{ec}{1-\alpha}n^{1-\alpha}(1+o(1)), \quad &\text{ if } 0 \leq \alpha < 1,\\
(ec-1)\ln(n)(1+o(1)), &\text{ if } \alpha=1.
\end{dcases}
\end{align*}
\end{cor}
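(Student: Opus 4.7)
The strategy is to apply Theorem~\ref{thm:upper-bounded} with an auxiliary function $\vartheta$ chosen so that the differential condition (ii), $e\psi(x)\vartheta(x)\leq \vartheta'(x)$, holds with equality; this gives the tightest estimate obtainable from the theorem for a given $\psi$. Substituting $\psi(x)=cx^{-\alpha}$, the saturated condition $\vartheta'(x)=ec\,x^{-\alpha}\vartheta(x)$ is a separable ODE whose solution has a different shape depending on whether $\alpha<1$ or $\alpha=1$, and this dichotomy is exactly what produces the two cases in the statement.

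In the case $0\leq \alpha<1$, integrating the ODE suggests the choice
\[
\vartheta(x)=\exp\!\Bigl(\tfrac{ec}{1-\alpha}\,x^{1-\alpha}\Bigr),
\]
so that $\vartheta'(x)=ec\,x^{-\alpha}\vartheta(x)$ by construction. The remaining checks are routine: condition (iii) reads $\vartheta'(1)=ec\cdot\exp(ec/(1-\alpha))\geq 1$, which follows from the hypothesis $c\geq e^{-1}$; for condition (i), a short computation gives $\vartheta''(x)=\vartheta'(x)\cdot x^{-1}(ec\,x^{1-\alpha}-\alpha)$, which is non-negative for all $x\geq 1$ precisely because $ec\geq 1\geq \alpha$. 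Theorem~\ref{thm:upper-bounded} then yields $\mathbb{E}(H_{n,\sigma})\leq \ln\vartheta'(n)(1+o(1))$, and since
\[
\ln\vartheta'(n)=\tfrac{ec}{1-\alpha}\,n^{1-\alpha}+\ln(ec)-\alpha\ln n,
\]
the leading term gives the claimed bound $\tfrac{ec}{1-\alpha}n^{1-\alpha}(1+o(1))$.

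In the case $\alpha=1$, the ODE integrates to a power, so I would take $\vartheta(x)=x^{ec}$ with $\vartheta'(x)=ec\,x^{ec-1}$. Condition (ii) holds by construction, (iii) reduces to $\vartheta'(1)=ec\geq 1$, and (i) follows from $\vartheta''(x)=ec(ec-1)x^{ec-2}\geq 0$, using $ec\geq 1$ again. Theorem~\ref{thm:upper-bounded} then gives $\mathbb{E}(H_{n,\sigma})\leq \ln(ec\,n^{ec-1})(1+o(1))$; the footnote's observation that the $\mathcal{L}^*_{up}$ lower bound $\psi(x)\geq 2/(x-1)$ forces $c\geq 2$ (hence $ec-1>0$) ensures the $\ln n$ term is genuinely dominant, yielding the stated asymptotic $(ec-1)\ln n\,(1+o(1))$.

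I do not expect any serious obstacle: the proof is essentially a template instantiation of the previous theorem. The only delicate bookkeeping is verifying that the monotonicity of $\vartheta'$ demanded by condition (i) holds on all of $[1,\infty)$ and not just asymptotically; in the case $\alpha<1$ this reduces precisely to the inequality $ec\geq \alpha$, which is exactly what the assumption $c\geq e^{-1}$ combined with $\alpha\leq 1$ guarantees.
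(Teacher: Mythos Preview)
Your proof is correct and takes essentially the same approach as the paper: you choose the identical auxiliary functions $\vartheta(x)=\exp\bigl(\tfrac{ec}{1-\alpha}x^{1-\alpha}\bigr)$ for $\alpha<1$ and $\vartheta(x)=x^{ec}$ for $\alpha=1$, and verify conditions (i)--(iii) of Theorem~\ref{thm:upper-bounded} in the same way. Your explicit framing of the choice of $\vartheta$ as solving the saturated ODE $\vartheta'(x)=e\psi(x)\vartheta(x)$ is a nice piece of exposition that the paper leaves implicit.
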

\begin{proof}
For the proof, we apply Theorem~\ref{thm:upper-bounded}.
Thus, we have to find a suitable function $\vartheta$, which satisfies the requirements of Theorem~\ref{thm:upper-bounded}.
Define
\begin{align*}
\xi(x)=\begin{dcases}
\frac{c x^{1-\alpha}}{1-\alpha} \quad &\text{ if } 0 \leq \alpha < 1,\\
c \ln x \quad &\text{ if } \alpha = 1,\end{dcases} 
\end{align*}
and set $\vartheta(x)=e^{e\xi(x)}$.
We find that $\vartheta\colon \mathbb{R}^+\to \mathbb{R}$ is differentiable and non-negative. We have
\begin{align*}
\vartheta'(x)=\begin{dcases}
ce^{1+\frac{ec}{1-\alpha}x^{1-\alpha}}x^{-\alpha} &\text{ if } 0 \leq \alpha < 1, \\
ecx^{ec-1} &\text{ if } \alpha = 1.
\end{dcases}
\end{align*}

Clearly, in the case that $\alpha=1$, we find that $\vartheta'$ is increasing. 
In the case that $0 \leq \alpha <1$, we consider the function $\vartheta''$:
We have
\begin{align*}
\vartheta''(x)=ce^{1+\frac{ce}{1-\alpha}x^{1-\alpha}}\left(ecx^{-2\alpha}-\alpha x^{-\alpha-1}\right)
\end{align*}
in this case. As $\frac{ec}{\alpha}\geq 1 \geq x^{\alpha-1}$ for all $x\geq 1$, we find that $\vartheta''(x)\geq 0$ for all $x\geq 1$, such that $\vartheta'$ is increasing for $x\geq 1$ and condition (i) of Theorem~\ref{thm:upper-bounded} is satisfied.
Furthermore, a short calculation shows that condition (ii) is satisfied (in fact, equality holds in condition (ii)).
Finally, as $c \geq e^{-1}$ by assumption, we find that $\vartheta'(1)=ce^{1+ec/(1-\alpha)}\geq 1$, respectively, $\vartheta'(1)=ec \geq 1$, such that condition (iii) is also satisfied.
The statement of the corollary now follows from Theorem~\ref{thm:upper-bounded}.
\end{proof}
We remark that Theorem~\ref{thm:upper-bounded} can be applied to classes $\mathcal{L}_{up}^*(\psi)$ for many more functions $\psi$. Moreover, recall that via the inclusion $\mathcal{L}_{up}(\psi, N) \subseteq \mathcal{L}^*_{up}(2\psi,N)$, we immediately obtain results on $\mathbb{E}(H_{n,\sigma})$ for $\sigma \in \mathcal{L}_{up}(\psi, N)$ from Theorem~\ref{thm:upper-bounded} for some functions $\psi$ as well.
For the binary search tree model (Example~\ref{ex:bst}), the uniform model (Example~\ref{ex:uniform}) and the binomial random tree model (Example~\ref{ex:dst}), Theorem~\ref{thm:upper-bounded} yields the following asymptotic upper bounds on the average height:

\begin{example}\label{ex:upper-bounded-bst}
For the binary search tree model, we find that $\sigma_{bst} \in \mathcal{L}_{up}^*(\psi_{bst},N)$, where $\psi_{bst}\colon \mathbb{R} \to \mathbb{R}$ is given by $\psi_{bst}=2/(x-1)$ and $N \geq 2$.
Choose
\begin{align*}
\vartheta_{bst}(x)= x^{2e\frac{N}{N-1}},
\end{align*}
for any $N \geq 2$, then $\vartheta_{bst}$ clearly satisfies conditions (i) and (iii) of Theorem~\ref{thm:upper-bounded}. 
Moreover, we have
\begin{align*}
\frac{2e}{x-1}x^{2e\frac{N}{N-1}}\leq 2e\frac{N}{N-1}x^{2e\frac{N}{N-1}-1},
\end{align*}
for $x \geq N$, such that condition (ii) is also satisfied.
Hence, we have
\begin{align*}
\mathbb{E}(H_{n,\sigma_{bst}})\leq \bigl(2e\frac{N}{N-1}-1\bigr)\ln(n)(1+o(1)),
\end{align*}
where the choice of $N\geq 2$ is arbitrary.
Thus, Theorem~\ref{thm:upper-bounded} yields for the binary search tree model $\mathbb{E}(H_{n,\sigma_{bst}})\leq c\ln(n)(1+o(1))$ for any constant $c>2e-1\approx 4.436564$. 
Recall that it was shown e.g. in~\cite{Devroye86}, that $\mathbb{E}(H_{n,\sigma_{bst}})\leq c_{bst}\ln(n)(1+o(1))$, where $c_{bst} \approx 4.31107...$ is the unique solution of $x\ln(2e/x)=1$ with $x \geq 2$. 
Thus, our general Theorem~\ref{thm:upper-bounded} yields a result quite close to the precise asymptotic growth of the average height in the binary search tree model, while at the same time covering a whole class of sources.
\end{example}

\begin{example}\label{ex:upper-bounded-dst}
For the binomial random tree model, it was shown in \cite{SeelbachLohreyWagner18}, that $\sigma_{bin,p} \in \mathcal{L}(\psi_{bin,p},N)$ for a function $\psi_{bin,p}(x) \in \Theta(x^{-1/2})$ and an integer $N \geq 2$.
Thus, there is $c>e^{-1}$ and $N' \geq N$, such that
$\sigma_{bin,p} \in \mathcal{L}(\psi,N')$ for $\psi(x)=c\cdot x^{-1/2}$.
By Corollary~\ref{cor:xhochalpha}, we find that
\begin{align*}
\mathbb{E}(H_{n,\sigma_{bin,p}})\in \mathcal{O}(n^{1/2}).
\end{align*}
In the next section, we show that this result can be improved: In Example~\ref{ex:dst-weakly}, we obtain an upper bound $\mathbb{E}(H_{n,\sigma_{bin,p}})\in \mathcal{O}(\log n)$ for the binomial random tree model.
\end{example}

Finally, for the uniform distribution, Theorem~\ref{thm:upper-bounded} and Corollary~\ref{cor:xhochalpha} only yield a trivial upper bound of the form $\mathbb{E}(H_{n,\sigma_{uni}}) \in \mathcal{O}(n)$, as we have $\sigma_{uni}(1,n-1)\geq 1/4$ for every $n \geq 2$.
However, also for the uniform distribution, we will show a non-trivial upper bound in the next section (Example~\ref{ex:uniform-weakly}).

\subsection{Weakly-balanced sources}

In this section, we investigate another class of leaf-centric binary tree sources, which is called \emph{weakly-balanced} binary tree sources.
This class has also been considered in the context of DAG-compression in \cite{SeelbachLohreyWagner18}, and represents a generalization of balanced binary tree sources introduced in \cite{ZhangYK14} and further analysed in \cite{GanardiHuckeLohreySeelbach19}.

\begin{defi}[$\phi$-weakly-balanced sources]
Let $\phi\colon \mathbb{N} \to (0,1]$ denote a decreasing function, let $\gamma \in (0,\frac{1}{2})$ and let $N \in \mathbb{N}$.
With $\mathcal{L}_{wbal}(\phi, \gamma,N)\subseteq \mathcal{L}$ we denote the set of mappings $\sigma \in \mathcal{L}$, which satisfy 
\begin{align*}
\sum_{\gamma n \leq k \leq (1-\gamma)n}\sigma(k,n-k)\geq \phi(n)
\end{align*}
for every $n \geq N$. 
\end{defi}

For the class of weakly-balanced tree sources, we show our second main theorem:
\begin{theorem}\label{thm:weakly-balanced-upper-bound}
Let $\sigma \in \mathcal{L}_{wbal}(\phi,\gamma, N)$. Then
\begin{align*}
\mathbb{E}(H_{n,\sigma}) \leq \frac{1}{\log(\frac{1}{1-\gamma})}\cdot \frac{\log(2\phi(n)^{-1}(1+\phi(n)))}{\log(1+\phi(n))}\cdot \log n(1+o(1)).
\end{align*}
\end{theorem}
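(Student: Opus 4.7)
My plan is to follow the general pattern established for Theorem~\ref{thm:upper-bounded}: apply Lemma~\ref{lemma:recursive} with a suitable decreasing sequence $(\varphi_n)$, establish an inductive upper bound on $\mathbb{E}(\varphi_n^{H_{n,\sigma}})$, and conclude via Jensen's inequality. Specifically, I would take $\varphi_k = \psi := 1 + \phi(n)$ (constant, hence trivially decreasing) for the target size $n$, and write $a_k := \mathbb{E}(\psi^{H_{k,\sigma}})$ and $\tau(j,k) := \sigma(j,k-j) + \sigma(k-j,j)$, which satisfies $\sum_j \tau(j,k) = 2$. After symmetrization in $(k,n-k)$, Lemma~\ref{lemma:recursive} then gives $a_k \leq \psi \sum_j \tau(j,k)\, a_j$.

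Using the $\phi$-weakly-balanced hypothesis $\sum_{\gamma k \leq j \leq (1-\gamma)k} \tau(j,k) \geq 2\phi(k)$ together with the (inductively established) monotonicity of $a$, I split the sum into balanced and unbalanced parts to obtain
\begin{align*}
a_k \leq 2\psi \bigl[\phi(k)\, a_{\lfloor(1-\gamma)k\rfloor} + (1-\phi(k))\, a_{k-1}\bigr].
\end{align*}
The central task is then to prove by induction on $k$ that $a_k \leq C\,k^{\alpha}$ for
\begin{align*}
\alpha := \frac{\log\!\left(2\phi(n)^{-1}(1+\phi(n))\right)}{\log(1/(1-\gamma))},
\end{align*}
with a constant $C$ chosen large enough to cover the base cases $k < N$ via the trivial estimate $a_k \leq \psi^{k-1}$. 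The exponent $\alpha$ is tuned so that $(1-\gamma)^{\alpha} = \phi(n)/(2\psi)$, whence the balanced contribution $2\psi\phi(n)(1-\gamma)^{\alpha}$ equals exactly $\phi(n)^2$, producing a clean simplification in the induction step. A final application of Jensen's inequality to $\psi^{\mathbb{E}(H_{n,\sigma})} \leq a_n \leq C n^{\alpha}$ then yields
\begin{align*}
\mathbb{E}(H_{n,\sigma}) \leq \frac{\alpha \log n + \log C}{\log \psi} = \frac{\alpha}{\log(1+\phi(n))}\log n\,(1+o(1)),
\end{align*}
which is the claimed bound.

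The hard part will be closing the inductive step. After substituting $(1-\gamma)^{\alpha} = \phi(n)/(2\psi)$ and using $\phi(k) \geq \phi(n)$ (since $\phi$ is decreasing and $k \leq n$), the inequality to verify reduces essentially to
$\phi(n)^2 + 2(1-\phi(n)^2)(1-1/k)^{\alpha} \leq 1$, but the residual $(1-1/k)^{\alpha}$ does not fall below $1/2$ for large $k$ when $\phi(n)$ is small, so the naive polynomial ansatz does not close immediately. To absorb this residue within the multiplicative $(1+o(1))$ slack of the conclusion, I anticipate needing a subtler analysis of the recursion: either iterating Lemma~\ref{lemma:recursive} over several consecutive levels in order to amortize the factor of $2$ from $\varphi^{\max(h_L,h_R)} \leq \varphi^{h_L}+\varphi^{h_R}$, or refining the polynomial ansatz by a slowly decaying correction term so that the recursive inequality closes for all $k$ in the relevant asymptotic range.
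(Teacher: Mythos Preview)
Your overall template is exactly the paper's: set $\varphi_n=1+\phi(n)$, prove an inductive bound of the form $\mathbb{E}(\varphi_n^{H_{n,\sigma}})\leq C\,n^{\kappa_n}$ with $\kappa_n=\log(2\phi(n)^{-1}(1+\phi(n)))/\log((1-\gamma)^{-1})$, and finish by Jensen. You also correctly locate where your argument stalls. The missing idea is simple and removes the need for any iteration or correction term.

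The trouble comes from symmetrizing \emph{everywhere}. After symmetrization you are forced to bound the unbalanced contribution by something like $2(1-\phi)(k-1)^{\alpha}$, and the extra factor $2$ is exactly what prevents the induction from closing. The paper instead splits the sum \emph{before} symmetrizing: it keeps the unbalanced ranges $k<\gamma n$ and $k>(1-\gamma)n$ in the original form $\sigma(k,n-k)\bigl(k^{\kappa_n}+(n-k)^{\kappa_n}\bigr)$ and only symmetrizes the balanced middle range. On the unbalanced pieces it then applies the elementary inequality
\[
k^{\kappa_n}+(n-k)^{\kappa_n}\;\leq\;n^{\kappa_n}\qquad(\kappa_n\geq 1),
\]
valid because $\kappa_n>1$ (indeed $2\phi(n)^{-1}(1+\phi(n))\geq 4>(1-\gamma)^{-1}$). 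This yields an unbalanced contribution of $(1-\alpha)\,\varphi_n n^{\kappa_n}$ with \emph{no} factor $2$, while the balanced contribution becomes $2\alpha\,\varphi_n(1-\gamma)^{\kappa_n}n^{\kappa_n}$, where $\alpha:=\sum_{\gamma n\leq k\leq(1-\gamma)n}\sigma(k,n-k)\in[\phi(n),1]$. The right-hand side is linear in $\alpha$, so it suffices to check the endpoints $\alpha=\phi(n)$ and $\alpha=1$; both give exactly $n^{\kappa_n}$ by the choice of $\kappa_n$ and $\varphi_n$. No $(1-1/k)^{\alpha}$ term ever appears, so the induction closes on the nose.

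Two minor remarks. First, your appeal to ``monotonicity of $a$'' is unnecessary (and not obviously true for $a_k=\mathbb{E}(\psi^{H_{k,\sigma}})$ as stated); once you use the subadditivity trick you only need the monotone inductive bound $Ck^{\kappa}$, not monotonicity of $a_k$ itself. Second, the paper lets $\varphi_n$ and $\kappa_n$ vary with $n$ rather than freezing them at the target size; this is a cosmetic difference and your constant-$\psi$ version works equally well once the unbalanced part is handled via subadditivity.
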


\begin{proof}

We set $\varphi_n = 1+\phi(n)$ and 
\begin{align*}
\kappa_n = \frac{\log(2\phi(n)^{-1}\varphi_n)}{\log((1-\gamma)^{-1})}.
\end{align*}
Note that as $\phi$ is a decreasing function, we find that the sequence $(\kappa_n)_{n \in \mathbb{N}}$ is increasing in $n$ and the sequence $(\varphi_n)_{n \in \mathbb{N}}$ is decreasing in $n$.
In order to prove the statement, we consider the expected value
\begin{align*}
\mathbb{E}(\varphi_n^{H_{n,\sigma}})=\sum_{t \in \mathcal{T}_n}P_{\sigma}(t)\varphi_n^{h(t)}
\end{align*}
and show that
\begin{align}\label{eq:induction-weakly-balanced}
\mathbb{E}(\varphi_n^{H_{n,\sigma}})\leq 2^Nn^{\kappa_n}.
\end{align}
We prove the statement \eqref{eq:induction-weakly-balanced} inductively in $n$.
For the base case, we consider the integers $1 \leq n < N$.
The maximal height of a binary tree of size $n < N$ is bounded from above by $N$.
We thus have $\mathbb{E}(\varphi_n^{H_{n,\sigma}})\leq \varphi_n^N\leq \varphi_n^Nn^{\kappa_n}\leq 2^Nn^{\kappa_n}$ for every integer $1 \leq n < N$.
\\

For the induction step, let $n \geq N$. By Lemma~\ref{lemma:recursive}, we find that $\mathbb{E}(\varphi_n^{H_{n,\sigma}})$ satisfies the following recursive estimate:
\begin{align*}
\mathbb{E}(\varphi_n^{H_{n,\sigma}})
\leq \varphi_n\sum_{k=1}^{n-1}\sigma(k,n-k)\left(\mathbb{E}(\varphi_k^{H_{k,\sigma}})+\mathbb{E}(\varphi_{n-k}^{H_{n-k,\sigma}})\right).
\end{align*}
By induction hypothesis, we have
\begin{align}\label{eq:weakly-balanced-induction-step}
\mathbb{E}(\varphi_n^{H_{n,\sigma}})\leq 2^N\varphi_n \sum_{k=1}^{n-1}\sigma(k,n-k)\left(k^{\kappa_k}+(n-k)^{\kappa_{n-k}}\right).
\end{align}
We distinguish two cases.\\

\textit{Case $1$:} $n \leq \gamma^{-1}$ (and thus, $\gamma n \leq 1$).
Reordering the sum on the right-hand side of \eqref{eq:weakly-balanced-induction-step}, we obtain
\begin{align*}
\mathbb{E}(\varphi_n^{H_{n,\sigma}})&\leq 2^N\varphi_n \sum_{k=1}^{n-1}\left(\sigma(k,n-k)+\sigma(n-k,k)\right)k^{\kappa_k}.
\end{align*}
As $(\kappa_n)_{n \in \mathbb{N}}$ is increasing, and as $\sigma \in \mathcal{L}$, we have
\begin{align*}
\mathbb{E}(\varphi_n^{H_{n,\sigma}})&\leq 2^{N+1}\varphi_n (n-1)^{\kappa_{n-1}} \leq 2^{N+1}\varphi_n (n-1)^{\kappa_{n}}.
\end{align*} 
By definition of $\kappa_n$, we have
\begin{align*}
\kappa_n = \frac{\log(2\phi(n)^{-1}\varphi_n)}{\log((1-\gamma)^{-1})}\geq \frac{\log(2\varphi_n)}{\log(n(n-1)^{-1})},
\end{align*}
for every integer $1 \leq n \leq \gamma^{-1}$. Thus, we have
$2\varphi_n(n-1)^{\kappa_n}n^{-\kappa_n} \leq 1$ for every integer $1 \leq n \leq \gamma^{-1}$, which is equivalent to $2\varphi_n (n-1)^{\kappa_{n}}\leq n^{\kappa_n}$. The statement for case $1$ hence follows.\\

\textit{Case $2$:} $n > \gamma^{-1}$ (and hence, $\gamma n > 1$).
We obtain from inequality \eqref{eq:weakly-balanced-induction-step}:
\begin{align*}
\mathbb{E}(\varphi_n^{H_{n,\sigma}})\leq \ &2^N\varphi_n \sum_{1 \leq k < \gamma n}\sigma(k,n-k)\left(k^{\kappa_n}+(n-k)^{\kappa_{n}}\right)\\
+\ &2^N\varphi_n \sum_{\gamma n \leq k \leq (1-\gamma)n}\left(\sigma(k,n-k)+\sigma(n-k,k)\right)k^{\kappa_n}\\
+\ &2^N\varphi_n \sum_{(1-\gamma)n < k \leq n-1}\sigma(k,n-k)\left(k^{\kappa_n}+(n-k)^{\kappa_{n}}\right).
\end{align*}
With $(n-k)^{\kappa_n}+k^{\kappa_n}\leq n^{\kappa_n}$ for every integer $1 \leq k \leq n-1$, we obtain
\begin{align*}
\mathbb{E}(\varphi_n^{H_{n,\sigma}})\leq & \  2^N\varphi_n n^{\kappa_n}\Bigl(\sum_{1 \leq k < \gamma n}\sigma(k,n-k) + \sum_{(1-\gamma)n < k \leq n-1}\sigma(k,n-k)\Bigr)\\
&+2^{N+1}\varphi_n(1-\gamma)^{\kappa_n}n^{\kappa_n} \sum_{\gamma n \leq k \leq (1-\gamma)n}\sigma(k,n-k).
\end{align*}
We set
\begin{align*}
\alpha:=\sum_{\gamma n \leq k \leq (1-\gamma)n}\sigma(k,n-k)\
\end{align*}
and obtain 
\begin{align}\label{eq:weakly-balanced-linear}
\mathbb{E}(\varphi_n^{H_{n,\sigma}})\leq 2^N\left( (1-\alpha) \varphi_n n^{\kappa_n}+2\alpha \varphi_n(1-\gamma)^{\kappa_n}n^{\kappa_n}\right).
\end{align}
The right-hand side of \eqref{eq:weakly-balanced-linear} is either linearly increasing or decreasing in $\alpha$. As $\sigma \in \mathcal{L}_{wbal}(\phi, \gamma, N)$ and $n \geq N$, we have $\phi(n) \leq \alpha \leq 1$ and thus, the right-hand side of~\eqref{eq:weakly-balanced-linear} attains its maximal value either at $\alpha=1$ or $\alpha=\phi(n)$.
For $\alpha=1$, we obtain
\begin{align*}
\mathbb{E}(\varphi_n^{H_{n,\sigma}})\leq 2^{N+1}\varphi_n(1-\gamma)^{\kappa_n}n^{\kappa_n}.
\end{align*}
As $\kappa_n = \log(2\phi(n)^{-1}\varphi_n)(\log((1-\gamma)^{-1}))^{-1}$, we find $2\varphi_n(1-\gamma)^{\kappa_n}n^{\kappa_n}\leq \phi(n)n^{\kappa_n}$ and hence, $\mathbb{E}(\varphi_n^{H_{n,\sigma}})\leq 2^Nn^{\kappa_n}$ in this case.
 For $\alpha=\phi(n)$, we have
 \begin{align*}
\mathbb{E}(\varphi_n^{H_{n,\sigma}})\leq 2^N\left( (1-\phi(n)) \varphi_n n^{\kappa_n}+2\phi(n) \varphi_n(1-\gamma)^{\kappa_n}n^{\kappa_n} \right).
\end{align*}
Again by definition of $\kappa_n$, we have $2\phi(n) \varphi_n(1-\gamma)^{\kappa_n}n^{\kappa_n}\leq \phi(n)^2n^{\kappa_n} $.
By definition of $\varphi_n$, we thus have
  \begin{align*}
\mathbb{E}(\varphi_n^{H_{n,\sigma}})\leq 2^N \left( (1-\phi(n)) (1+\phi(n)) n^{\kappa_n}+\phi(n)^2n^{\kappa_n}\right) = 2^N n^{\kappa_n}.
\end{align*}
This concludes the induction.
Using Jensen's inequality, we now obtain from estimate~\eqref{eq:induction-weakly-balanced}:
\begin{align*}
\mathbb{E}(H_{n,\sigma})\leq \frac{\kappa_n\log n + N}{\log(1+\phi(n))}.
\end{align*}
The statement of the theorem follows.
\end{proof}

In the following examples, we apply Theorem~\ref{thm:weakly-balanced-upper-bound} to the random tree models from Example~\ref{ex:bst}, Example~\ref{ex:uniform} and Example~\ref{ex:dst}.

\begin{example}\label{ex:bst-weakly}
For the binary search tree model, we find that $\sigma_{bst}\in\mathcal{L}_{wbal}(\phi_{bst}, \gamma_{bst}, N)$ with $\phi_{bst}(n)=1/2$ for every $n \geq 2$ and $\gamma_{bst}=1/4$. 
Hence, by Theorem~\ref{thm:weakly-balanced-upper-bound}, we obtain
\begin{align*}
\mathbb{E}(H_{n, \sigma_{bst}}) \leq \frac{\log(6)}{\log(4/3)\log(3/2)}\log n (1+o(1)),
\end{align*}
where the leading constant evaluates to $\log(6)\log(4/3)^{-1}\log(3/2)^{-1} \approx 10.65$.
Recall that in Example~\ref{ex:upper-bounded-bst}, we have obtained a stronger upper bound on $\mathbb{E}(H_{n, \sigma_{bst}})$.
\end{example}

\begin{example}\label{ex:dst-weakly}
For the binomial random model $\sigma_{bin,p}$, it is shown in \cite[Example 3.11]{SeelbachLohreyWagner18}, that for every $0 < \varepsilon < 1$ and $\gamma < \min\{p,1-p\}$, there is an integer $N$ (depending on $p$, $\gamma$ and $\varepsilon$), such that $\sigma_{bin,p} \in \mathcal{L}_{wbal}(\phi,\gamma, N)$, where $\phi$ is the function defined by $\phi(n)=1-\varepsilon$ for every $n \geq 0$. 
Thus, Theorem~\ref{thm:weakly-balanced-upper-bound} yields
\begin{align*}
\mathbb{E}(H_{n,\sigma_{bin,p}})\leq c_p\cdot \log(n)(1+o(1)),
\end{align*}
where we can choose any constant $c_p > 2\cdot \max\{\log(1/p)^{-1}, \log(1/(1-p))^{-1}\}$, as $\varepsilon>0$ is arbitrary.
In particular, Theorem~\ref{thm:weakly-balanced-upper-bound} yields a more precise upper bound on $\mathbb{E}(H_{n,\sigma_{bin,p}})$ than Theorem~\ref{thm:upper-bounded} (see Example~\ref{ex:upper-bounded-dst}).
\end{example}

\begin{example}\label{ex:uniform-weakly}
Let us consider the uniform distribution $\sigma_{uni}$ next. 
In \cite[Example 3.15]{SeelbachLohreyWagner18}, it is shown that $\sigma_{uni}$ belongs to a particular class of leaf-centric tree sources, which is introduced in \cite[Definition 3.12]{SeelbachLohreyWagner18}:
Let $\vartheta\colon \mathbb{R} \to (0,1]$ be a decreasing function, let $N \in \mathbb{N}$ and let $0<\gamma < 1/2$. The class of \emph{$\vartheta$-strongly-balanced sources} $\mathcal{L}_{sbal}(\vartheta, \gamma,N) \subseteq \mathcal{L}$ is defined as the set of mappings $\sigma$, for which there is a constant $c\geq 1$, such that for every $n \geq N$ and every integer $r$ with $c \leq r \leq \lceil \gamma n\rceil$, the following inequality holds:
\begin{align*}
\sum_{r \leq i \leq n-r} \sigma(i,n-i)\geq \vartheta(r).
\end{align*}
In particular, every strongly balanced leaf-centric tree source is also weakly-balanced: Let $\sigma \in \mathcal{L}_{sbal}(\vartheta, \gamma,N)$ and define $\varphi\colon \mathbb{N} \to (0,1]$ by $\phi(n)=\vartheta(\lceil \gamma n\rceil)$. Then we find that $\sigma \in \mathcal{L}_{wbal}(\phi, \gamma, N)$.
Thus, we can apply \cite[Example 3.15]{SeelbachLohreyWagner18} in order to show that $\sigma_{uni}$ is weakly-balanced.
From \cite[Example 3.15]{SeelbachLohreyWagner18}, we find that for every $\delta>1$ and $0 < \gamma < 1/2$, there is an integer $N$, such that $\sigma_{uni} \in \mathcal{L}_{wbal}(\phi_{uni}, \gamma, N)$, where $\phi_{uni}\colon \mathbb{N} \to (0,1]$ is defined by
\begin{align*}
\phi_{uni}(n)=\frac{1-2\gamma}{\delta\sqrt{\pi \gamma n}}.
\end{align*}
Thus, we can apply Theorem~\ref{thm:weakly-balanced-upper-bound}. With
$
\log(2\phi_{uni}(n)^{-1}(1+\phi_{uni}(n))) \in \mathcal{O}(\log n)
$
and $\log(1+\phi_{uni}(n))^{-1} \in \mathcal{O}(\sqrt{n})$, we obtain
\begin{align*}
\mathbb{E}(H_{n,\sigma_{uni}}) \in \mathcal{O}\left(\sqrt{n} \log (n)^2\right).
\end{align*}
We remark that in~\cite{FlajoletOdlyzko82}, it is shown that $\mathbb{E}(H_{n,\sigma_{uni}}) \in \Theta(\sqrt{n})$. 
Hence, our upper bound is slightly weaker than the correct asymptotic growth of $\mathbb{E}(H_{n,\sigma_{uni}})$.
\end{example}

\section{Conclusion and open problems}

We have derived upper bounds on the average height of random binary trees with respect to two classes of leaf-centric binary tree sources.

With respect to the uniform probability distribution (Example~\ref{ex:uniform}), our main results only yield a trivial upper bound (Theorem~\ref{thm:upper-bounded}), respectively, an upper bound that does not match the correct asymptotic growth of the average height in the uniform model by a factor of $\log(n)^2$ (Theorem~\ref{thm:weakly-balanced-upper-bound} and Example~\ref{ex:uniform-weakly}). A natural open question is thus to find a non-trivial class of leaf-centric binary tree sources that contains the uniform model, 
such that an upper bound on the average height with respect to this whole class yields the correct asymptotic upper bound on the average height for the uniform model.
A possible candidate for this is the class of strongly-balanced leaf-centric binary tree sources, which was introduced in \cite{SeelbachLohreyWagner18} (see also Example~\ref{ex:uniform-weakly}). In particular, whereas the results from \cite{SeelbachLohreyWagner18} on the average DAG-size for upper-bounded and weakly-balanced leaf-centric binary tree sources only yield trivial upper bounds with respect to the uniform distribution, the result from~\cite{SeelbachLohreyWagner18} for strongly-balanced leaf-centric tree sources gives the correct asymptotic bound for the average DAG-size in the uniform model.
This suggests that a possible result on the average height for the class of strongly-balanced leaf-centric tree sources could yield a stronger upper bound on the average height in the uniform model.

Furthermore, another topic for future research might be to derive \emph{lower bounds} on the average height of random binary trees with respect to classes of leaf-centric tree sources. 
A possible class with respect to which a lower bound on the average height could be derived is the class of \emph{unbalanced} leaf-centric binary tree sources, which was introduced in \cite{SeelbachLohreyWagner18} (again in the context of DAG-compression).

Moreover, leaf-centric binary tree sources have been generalized to  random tree models for plane trees called \emph{fixed-size ordinal tree sources} in \cite{MunroNSW21}. Future research might be to investigate the average height of plane trees with respect to classes of fixed-size ordinal tree sources.\\

\noindent
\textbf{Acknowledgements.}
The authors thanks Markus Lohrey for valuable discussions
and helpful comments on this paper.

\bibliographystyle{plain}
\bibliography{bib}

\end{document}